\documentclass[12pt,leqno]{amsart}

\usepackage{amsmath,amssymb,amsthm}
\usepackage{hyperref}

\newcommand{\T}{{\sf T}}
\newcommand{\Tg}{{\sf T_{\bullet}}}

\newcommand{\Id}{\mathrm{Id}}

\DeclareMathOperator{\diag}{diag}

\newtheorem{theorem}{\sc Theorem}[section]
\newtheorem{thm}[theorem]{\sc Theorem}
\newtheorem{lem}[theorem]{\sc Lemma}
\newtheorem{prop}[theorem]{\sc Proposition}

\newtheorem{conjecture}[theorem]{\sc Conjecture}

\newtheorem*{thmA}{Theorem A}
\newtheorem*{thmB}{Theorem B}

\theoremstyle{definition}
\newtheorem{exe}[theorem]{\sc Example}

\newtheorem{rem}[theorem]{\sc Remark}

\begin{document}

\title[Generalized torsion in triangular matrix groups]{Generalized torsion in triangular matrix groups}
\author[Kato]{Júlia Kato}
\address{Departamento de Matem\'atica, Universidade de Bras\'ilia,
Bras\'ilia-DF, 70910-900 Brazil}
\email{(Kato) juliakato@mat.unb.br}
\author[Silva]{Douglas Silva}
\address{Departamento de Matem\'atica, Universidade Federal de Minas Gerais, Belo Horizonte-MG, 31270-901 Brazil}
\email{(Silva) douglasvps@ufmg.br}

\subjclass[2020]{20H25, 20F16, 16U60}
\keywords{Positive generalized identities; Generalized torsion elements; Triangular matrix group; Solvable groups; Units in commutative rings}
\begin{abstract}
An element of a group is called generalized torsion if a finite product of its conjugates equals the identity. We characterize the generalized torsion subset of upper triangular matrix groups over commutative rings and determine when these groups satisfy positive generalized identities in terms of torsion and exponent properties of the unit group of the underlying ring. Moreover, as an application of the developed theory, we construct triangular matrix groups satisfying positive generalized identities of prescribed degrees and obtain lower bounds for the minimum degrees of such identities. We also provide a counterexample to a question of Sherman~\cite{Sherman} concerning groups generated by elements of infinite order.

\end{abstract}

\maketitle

\section{Introduction}

An element $g$ of a group $G$ is called \emph{generalized torsion} if there exist elements $x_1,\dots,x_k\in G$ such that
\[
g^{x_1}g^{x_2}\cdots g^{x_k}=1,
\qquad
g^x=x^{-1}gx.
\]
The smallest such integer $k$ is called the \emph{generalized order} of $g$.

Generalized torsion has been studied in several contexts, notably as an obstruction to bi-orderability \cite{Rhemtulla,Bludov}, and more recently in several classes of groups from a topological perspective \cite{Teragaito2016,Juan,ItoMotegiTeragaito2021,ItoMotegiTeragaito2023}.

The set of all generalized torsion elements of $G$ is denoted by $\Tg(G)$. In general, this set need not be a subgroup and is not necessarily closed under taking powers \cite{BM}. Consequently, determining $\Tg(G)$ is often a difficult problem.

Moreover, the size of $\Tg(G)$ can vary dramatically. At one extreme, $\Tg(G)=\{1\}$ for torsion-free nilpotent groups, free groups, and Thompson's group $F$ (see \cite{BSS, DeroinNavasRivas}). At the opposite extreme, when $\Tg(G)=G$, we say that $G$ is a \emph{generalized torsion group}. Examples of such groups arise in a variety of settings \cite{Gorjuskin,Gorcakov,Osin}.

Even within linear groups, generalized torsion can exhibit very different behaviors. Indeed, Witte~\cite{W98} proved that $SL_n(K)$ is a generalized torsion group whenever $n>2$ or $|K|>3$, whereas if the additive group of a ring $R$ is torsion-free (for example, if $R$ is an integral domain of characteristic zero) then $UT_n(R)$ is torsion-free nilpotent and hence has no nontrivial generalized torsion. This suggests that linear groups provide a rich setting in which to study generalized torsion.

Closely related to generalized torsion are \emph{positive generalized identities}. Following Abdollahi, Daoud, Farrokhi and Guerboussa \cite{ADFG}, a polynomial word in one variable with coefficients in $G$ is an expression of the form
\[
w(x)=x^{g_1}x^{g_2}\cdots x^{g_k},
\]
where $g_1,\dots,g_k\in G$ are fixed. The integer $k$ is called the \emph{degree} of $w$.
We say that $G$ satisfies the positive generalized identity $w$ if
\[
w(g)=1
\qquad\text{for all } g\in G.
\]

Positive generalized identities have been studied in several works \cite{Endimioni2006,ADFG,BM,BS}. In the context of solvable groups, their existence often imposes strong finiteness conditions. For instance, \cite{ADFG} showed that if a finitely generated solvable group satisfies a positive generalized identity of prime degree $p$, then it must be a finite $p$-group. Thus, the possible identity degrees are often highly rigid.

These contrasting phenomena motivate the study of generalized torsion and positive generalized identities in solvable linear groups.

In this paper, we study the solvable linear group of upper triangular matrices
\[
T_n(R)=UT_n(R)\rtimes D_n(R^\times)
\]
over a commutative ring with identity $R$.

Our first main result gives a complete description of the generalized torsion elements of $T_n(R)$, showing that they are entirely determined by the torsion of the unit group $R^\times$.

\begin{thmA}\phantomsection\label{thm:A}
Let $R$ be a commutative ring with identity and $n \geq 1$. Then
\[
\Tg(T_n(R))
=
UT_n(R)\rtimes D_n(\T(R^\times)),
\]
where $\T(R^\times)$ denotes the torsion subgroup of $R^\times$. 

In particular:
\begin{enumerate}
    \item $\Tg(T_n(R))$ is a characteristic subgroup of $T_n(R)$; 
    \item $UT_n(R) \subseteq \Tg(T_n(R))$; and
    \item $\Tg(T_n(R)) = T_n(R)$ if and only if $R^\times$ is a torsion group.
\end{enumerate}
\end{thmA}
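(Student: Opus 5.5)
The proof has two inclusions.

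\emph{The inclusion $\subseteq$.} Consider the determinant-free homomorphism $\pi\colon T_n(R)\to D_n(R^\times)\cong (R^\times)^n$ that sends a matrix to its diagonal. Suppose $g^{x_1}\cdots g^{x_k}=1$. Since $(R^\times)^n$ is abelian, applying $\pi$ gives $\pi(g)^k=1$. So every diagonal entry of $g$ lies in $\T(R^\times)$, and $g\in UT_n(R)\rtimes D_n(\T(R^\times))$.

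\emph{The inclusion $\supseteq$.} Let $g=ud$ with $u\in UT_n(R)$ and $d=\diag(d_1,\dots,d_n)$, where each $d_i$ has finite order. Let $m$ be the lcm of these orders. I will use a filtration argument along the superdiagonals. Let $U_j$ be the normal subgroup of matrices in $UT_n(R)$ whose entries vanish on the first $j-1$ superdiagonals. Each quotient $U_j/U_{j+1}\cong R^{n-j}$ is abelian, and $T_n(R)$ acts on it through the diagonal.

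The key step is to show that every element of $UT_n(R)$ is generalized torsion of generalized order $2$. Given $h\in U_j$ with leading part $a\in R^{n-j}$, conjugating by the permutation-like diagonal $-1$ is not available, but conjugating by the unipotent matrices sends $h$ to something with the same leading part. So instead I will use the standard trick of writing $h^{-1}$ as a conjugate of $h$ modulo $U_{j+1}$. Concretely, take the diagonal matrix $t=\diag(1,-1,1,-1,\dots)$. It lies in $D_n(R^\times)$, and conjugating by it multiplies the $(i,i+j)$ entry by $(-1)^j$. This handles odd $j$. For even $j$ it fails, so I expect this to be the main obstacle.

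An alternative that avoids the parity issue is to work with positive products directly. Since $d^m=1$, the product $g\,g^{d}\,g^{d^2}\cdots g^{d^{m-1}}$ equals $(gd^{-1})^m$ up to the diagonal correction. More precisely, $g^{x_1}\cdots g^{x_k}$ with $x_i=d^{-i}$ gives $g^m$ times a unipotent element. So the problem reduces to $g^m\in UT_n(R)$, and it suffices to show that unipotent elements are generalized torsion. For $v\in UT_n(R)$, argue by downward induction on the filtration. Suppose $v\in U_j$ with leading part $a$. I will seek $y$ with $v\,v^{y}\in U_{j+1}$. For this I would try $y$ equal to a unipotent matrix $E$ built so that commutators $[E_{i,i+1},\cdot]$ carry entries into higher superdiagonals.

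Honestly, $UT_n(\mathbb Z)$ is torsion-free nilpotent and hence has trivial generalized torsion inside itself. So generalized torsion of unipotents must use diagonal conjugators from $T_n(R)$, for instance $\diag(-1,1,\dots)$-type elements, and when $2$ is not a unit the negation comes from $-1\in\T(R^\times)$. Rather than reducing to a single leading part, I would choose diagonal conjugators $\delta_1,\dots,\delta_r\in D_n(\{\pm1\})$, one for each sign pattern. Then $\prod_{\delta}v^{\delta}$ kills the leading parts on every superdiagonal at once, because each off-diagonal position $(i,l)$ is negated by exactly half the $\delta$'s. Iterating this through the nilpotency class finishes the argument.

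Finally, characteristicity follows from the equality, since generalized torsion is preserved by automorphisms. Item (2) is immediate, and item (3) follows because $D_n(\T(R^\times))=D_n(R^\times)$ exactly when $R^\times$ is torsion.
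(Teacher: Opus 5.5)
Your final argument is correct and shares the paper's skeleton. For $\subseteq$ you project onto the abelian group $D_n(R^\times)$. For $\supseteq$ you use $g^m\in UT_n(R)$ together with closure of generalized torsion under roots; your product identity is in fact exact, $g\,g^{d}\cdots g^{d^{m-1}}=(gd^{-1})^m$. For unipotents you cancel the lowest nonzero superdiagonal, on which products are additive and diagonal conjugation acts by $a_{j,j+k}\mapsto (\delta_{j+k}/\delta_j)a_{j,j+k}$, and then iterate $n-1$ times.

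Where you differ is in how you overcome the parity obstruction you ran into with $\diag(1,-1,1,\dots)$. The paper (Lemma~\ref{lem:TG-UT} with $\zeta=-1$) uses at level $i$ one matrix $D_i$ alternating between $-1$ and $1$ in blocks of length $i$. Then $d_{j+i}/d_j=-1$ for all $j$, so two conjugates suffice. This gives degree $2^{n-1}$, and the same construction with general $\zeta$ produces the exact degrees $s^{n-1}\exp(S)$ used in Theorem~\ref{thm:G(S)-identity} and Section~\ref{sec:applications}. You instead multiply over all $2^n$ sign patterns and use $\sum_{\epsilon\in\{\pm1\}^n}\epsilon_i\epsilon_l=0$ for $i\neq l$. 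This needs no design but yields degree $2^{n(n-1)}$, which does not matter for Theorem~A. Index this product by formal patterns $\epsilon\in\{\pm1\}^n$, not by the set $D_n(\{\pm1\})$, since that set is trivial when $\operatorname{char}R=2$.

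Two statements in your write-up are false, although the final argument does not rely on them. First, not every unipotent element has generalized order $2$. In $T_3(\mathbb Z)$ take
\[
v=\begin{pmatrix}1&3&1\\0&1&3\\0&0&1\end{pmatrix},\qquad
v^{-1}=\begin{pmatrix}1&-3&8\\0&1&-3\\0&0&1\end{pmatrix}.
\]
Any conjugate of $v$ with first superdiagonal $(-3,-3)$ has $(1,3)$-entry $\equiv1\pmod 3$, so $v$ is not conjugate to $v^{-1}$. Second, $\prod_\delta v^\delta$ does not kill all superdiagonals at once. The half-and-half cancellation applies only to the lowest nonzero superdiagonal, while higher ones pick up cross terms; this is precisely why the iteration is needed.
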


In fact, we prove that $\Tg(T_n(R))$ is itself a generalized torsion group, providing, for every commutative ring $R$, a linear nilpotent-by-abelian generalized torsion group. Moreover, the same techniques allow us to determine precisely when $\Tg(T_n(R))$, as well as certain related intermediate subgroups, satisfies a positive generalized identity.

\begin{thmB}\phantomsection\label{thm:B}
Let $R$ be a commutative ring with identity and $n \geq 1$. Then $T_n(R)$ satisfies a positive generalized identity if and only if $R^\times$ has finite exponent.
\end{thmB}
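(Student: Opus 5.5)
Necessity comes first. Suppose $T_n(R)$ satisfies a positive generalized identity $w(x)=x^{g_1}\cdots x^{g_k}$. Consider the quotient map $T_n(R)\to D_n(R^\times)\cong (R^\times)^n$, whose kernel is $UT_n(R)$. This quotient is abelian, so conjugation becomes trivial there. The identity therefore descends to $x^k=1$ for every $x\in (R^\times)^n$. Hence $R^\times$ has exponent dividing $k$. Equivalently, one can look at the $(1,1)$ diagonal entry of $w(\diag(u,1,\dots,1))$, which is $u^k$.

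For sufficiency, assume $R^\times$ has exponent $e$. The plan is to build an identity in stages, using the normal series $T_n(R)\trianglerighteq UT_n(R)=U_1\trianglerighteq U_2\trianglerighteq\cdots\trianglerighteq U_n=1$. Here $U_i$ denotes the matrices whose superdiagonals $1,\dots,i-1$ vanish. Each factor $U_i/U_{i+1}$ is abelian, isomorphic to $R^{n-i}$, and central in $UT_n(R)$. Under conjugation by $D_n(R^\times)$, the $(j,j+i)$ coordinate is multiplied by $d_j^{-1}d_{j+i}$.

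We use the standard composition trick for such identities. Suppose $w_1(x)$ maps every element of $G$ into a normal subgroup $N$. Suppose also that $w_2$ is a positive generalized identity of $N$ with coefficients in $G$, in the sense that $w_2(y)=1$ for all $y\in N$. Then $w_2(w_1(x))$ is again a positive generalized word in $x$, since $(x^{g_1}\cdots x^{g_k})^{h}$ is a product of conjugates of $x$. So $w_2(w_1(x))$ is a positive generalized identity of $G$. The first stage is $w_1(x)=x^e$, which sends $T_n(R)$ into $UT_n(R)$. It then suffices to find, for each $i$, a positive word $v_i$ with coefficients in $T_n(R)$ that sends $U_i$ into $U_{i+1}$. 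Composing $v_{n-1}\circ\cdots\circ v_1\circ w_1$ gives the required identity.

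The key step, and the main obstacle, is constructing $v_i$ when $R$ has characteristic $0$ or an additive group of unbounded exponent. In that case $UT_n(R)$ satisfies no power law, so diagonal conjugators must be used. Take $y\in U_i$. Modulo $U_{i+1}$, conjugation by a diagonal $d$ acts coordinatewise on the $i$-th superdiagonal. We want $d$ so that $y\cdot y^{d}\equiv 1 \pmod{U_{i+1}}$, which needs $d_j^{-1}d_{j+i}=-1$ for all $j$. Choose $d=\diag(1,-1,1,-1,\dots)$ when $i$ is odd, where $-1\in R^\times$ is available. When $i$ is even this choice gives $+1$, so a different construction is needed there. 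Since $U_i/U_{i+1}$ is central in $U_1$, it suffices to find units $c_1,\dots,c_m$ realizable as ratios $d_j^{-1}d_{j+i}$, simultaneously for all $j$, with $\sum c_l=0$. For even $i$, use conjugators $d^{(l)}=\diag(1,t,t^2,\dots)$ for a suitable unit $t$, so that every ratio on the $i$-th superdiagonal equals $t^{i}$. Then choose the number of factors so that $\sum_l t^{il}=0$, via the identity $(t-1)(1+t+\dots+t^{m-1})=t^m-1$, or simply take $m$ odd with alternating signs. If no suitable unit exists, for instance when $R^\times=\{1\}$ and so $\operatorname{char}R=2$, the additive group has exponent $2$ and $v_i(y)=y^2$ already works. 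I expect the bulk of the argument to be making this dichotomy uniform for arbitrary commutative $R$: either $-1\ne 1$ gives sign-cancellation, or the relevant additive quotient is killed by a fixed power. In the even-$i$ case with $-1\ne1$, I would first try products of conjugates by $\diag(1,\dots,1,-1,\dots,-1)$ to cancel selected coordinates, and handle the rest coordinate by coordinate.
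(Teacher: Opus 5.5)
Your necessity argument is correct. The paper substitutes the central scalars $\diag(a,\dots,a)$ instead, but reducing to the abelian quotient $D_n(R^\times)$ works just as well. Your overall architecture is also exactly the paper's: first $x\mapsto x^e$ into $UT_n(R)$, then words pushing $U_i$ into $U_{i+1}$, composed together. The gap is in the one step that carries content, namely the construction of $v_i$ for even $i$.

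Your conjugators $\diag(1,t,t^2,\dots)$ produce the ratio $t^{i}$ at every position of the $i$-th superdiagonal. So you would need units $t_l$ with $\sum_l t_l^{\,i}=0$. For $R=\mathbb{Z}$ the only units are $\pm1$, so for even $i$ every $t_l^{\,i}=1$ and the sum is $m\neq 0$; already $n=3$, $i=2$ defeats the construction. ``Alternating signs'' are not available either. The factor $y^{d}$ contributes $(d_{j+i}/d_j)\,y_{j,j+i}$ to the $(j,j+i)$ entry, so a minus sign there requires the ratio $d_{j+i}/d_j$ itself to be $-1$. Your closing sentence about $\diag(1,\dots,1,-1,\dots,-1)$ and handling things ``coordinate by coordinate'' is only a tentative plan. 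The dichotomy you anticipate is also unnecessary: $1+(-1)=0$ in every ring, and in characteristic $2$ the word $y\,y^{d}$ with $d=\diag(-1,\dots)$ is just $y^2$.

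The missing idea is that a single diagonal matrix can make \emph{every} ratio on the $i$-th superdiagonal equal to $-1$ at once. Take $D_i$ constant on consecutive blocks of length $i$, alternating between $-1$ and $1$. Then $j$ and $j+i$ always lie in adjacent blocks, so $d_{j+i}/d_j=-1$ for all $j\le n-i$. By additivity of the $i$-th superdiagonal on $U_i$, the word $v_i(y)=y\,y^{D_i}$ then maps $U_i$ into $U_{i+1}$ for every $i$, odd or even, giving an identity of degree $2^{n-1}e$.

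Your fallback can also be made rigorous. When $d$ has entries $\pm1$, the map $y\mapsto y\,y^{d}$ on $U_i$ multiplies the $(j,j+i)$ entry by $1+d_{j+i}/d_j\in\{0,2\}$, so entries already killed stay zero. The step matrix whose first $-1$ sits in position $p$ kills exactly the window $p-i\le j\le p-1$. Iterating over $p=i+1,2i+1,\dots$ therefore clears the whole superdiagonal. But this has to be stated and checked, and it is what the proposal currently lacks.
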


A noteworthy feature of \hyperref[thm:B]{Theorem~B} is that the possible identity degrees are determined by suitable torsion elements of $R^\times$. 

More generally, the key tool underlying both theorems is that diagonal conjugation acts on $UT_n(R)$ by rescaling superdiagonal entries. Combined with the nilpotent structure of $UT_n(R)$, this reduces questions about generalized torsion and positive generalized identities in $T_n(R)$ to the arithmetic properties of the unit group $R^\times$.

We illustrate this dependence on the base ring by providing examples of triangular matrix groups with markedly different behavior. For instance, if $R$ is the ring of integers of an algebraic number field, then $T_n(R)$ satisfies a positive generalized identity if and only if the underlying field is either $\mathbb{Q}$ or an imaginary quadratic field. Likewise, if $R$ is the integral group ring of a finite abelian group $G$, then $T_n(R)$ satisfies a positive generalized identity if and only if $G$ has exponent $1$, $2$, $3$, $4$, or $6$.

Moreover, as applications of the developed theory, we construct triangular matrix groups satisfying positive generalized identities of new minimum degrees, extending the constructions of~\cite{ADFG,BM}, and provide a counterexample to a question of Sherman~\cite{Sherman} concerning groups generated by elements of infinite order.

The paper is organized as follows. Section~\ref{sec:prelims} establishes notation and background on triangular matrix groups and positive generalized identities. Section~\ref{sec:thmA} contains the proof of \hyperref[thm:A]{Theorem~A}. Section~\ref{sec:identities} treats positive generalized identities and contains the proof
of \hyperref[thm:B]{Theorem~B} and the results on intermediate subgroups.
Section~\ref{sec:examples} presents families of rings illustrating Theorems~A and~B. Section~\ref{sec:applications} contains the applications to metabelian groups and
the Sherman counterexample.

\section{Preliminaries}\label{sec:prelims}
Throughout the paper, $R$ denotes a nonzero commutative ring with identity, and
$R^\times$ denotes its group of units. We consider the triangular matrix group
\[
T_n(R)=UT_n(R)\rtimes D_n(R^\times).
\]

For a subgroup $S\leq R^\times$, we define
\[
G_n(S)=\pi^{-1}(D_n(S)),
\]
where $\pi:T_n(R)\to D_n(R^\times)$ is the canonical projection onto the diagonal subgroup. Explicitly,
\[
G_n(S)=
\left\{
\begin{pmatrix}
d_1 & a_{1,2} & \cdots & a_{1,n}\\
0 & d_2 & \cdots & a_{2,n}\\
\vdots & \vdots & \ddots & \vdots\\
0 & 0 & \cdots & d_n
\end{pmatrix}
\;\middle|\;
d_i\in S,\;
a_{i,j}\in R
\right\}.
\]

Thus, the groups $G_n(S)$ interpolate between the unitriangular subgroup and the full triangular group:
\[
UT_n(R)=G_n(\{1\}),
\qquad
T_n(R)=G_n(R^\times).
\]

The group $UT_n(R)$ is nilpotent of class $n-1$, and its lower central series is given by
\[
\gamma_k(UT_n(R))
=
\left\{
M\in UT_n(R)
\mid
M_{i,j}=0
\text{ whenever }
0<j-i<k
\right\}.
\]
In other words, $\gamma_k(UT_n(R))$ consists of the matrices whose first $k-1$ superdiagonals vanish.

\medskip

In what follows, we will prove not only that $UT_n(R)$ is always a generalized torsion subset of $T_n(R)$, but also that one can choose a single polynomial word in $T_n(R)$ that vanishes on all unitriangular matrices.

Following \cite{ADFG}, this motivates the definition of a positive generalized identity for a subset $N$ of a group $G$. We say that $N$ satisfies a positive generalized identity in $G$ if there exists a polynomial word
\[
w(x)=x^{g_1}x^{g_2}\cdots x^{g_n},
\]
with $g_i\in G$, such that
\[
w(g)=1
\qquad \text{for all } g\in N.
\]

Finally, following the notation introduced in \cite{Endimioni2006}, we denote by $\widehat{\mathcal B}_n$ the class of groups satisfying a positive generalized identity of degree $n$.

\medskip

The following lemma shows that the multiplication of elements in $\gamma_k(UT_n(R))$ behaves additively on a fixed superdiagonal. This observation will be used to construct positive generalized identities in $UT_n(R)$.

These results are well known, but we include proofs for the reader's convenience.

\begin{lem}\label{lem:additivity}
Let $M_1,\dots,M_r \in \gamma_k(UT_n(R))$. Then, for every $j \le n-k$,
\[
\left(\prod_{i=1}^r M_i\right)_{j,j+k}
=
\sum_{i=1}^r (M_i)_{j,j+k}.
\]
\end{lem}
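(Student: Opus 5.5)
The plan is to reduce to the case $r=2$ by induction on $r$, and then handle two factors by expanding the matrix product entrywise. The induction needs one extra fact: $\gamma_k(UT_n(R))$ is closed under multiplication. This holds because it is a term of the lower central series, hence a subgroup. It can also be seen directly from the computation below, since the same estimate shows that every entry of $M_1M_2$ strictly between the diagonal and the $k$-th superdiagonal vanishes. Given closure, write $\prod_{i=1}^r M_i=\left(\prod_{i=1}^{r-1}M_i\right)M_r$, where the first factor lies in $\gamma_k(UT_n(R))$. The two-factor case and the inductive hypothesis then give the formula.

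For two matrices $A,B\in\gamma_k(UT_n(R))$, write $A=I+A'$ and $B=I+B'$, where $A'$ and $B'$ are strictly upper triangular. By the description of $\gamma_k$, their nonzero entries satisfy $A'_{i,l}\neq 0\Rightarrow l-i\ge k$, and likewise for $B'$. Then
\[
AB=I+A'+B'+A'B'.
\]
For the product term,
\[
(A'B')_{j,j+k}=\sum_{l}A'_{j,l}B'_{l,j+k}.
\]
A nonzero summand would need both $l-j\ge k$ and $j+k-l\ge k$. Adding these gives $0\ge 2k$, which is impossible since $k\ge 1$. So $(A'B')_{j,j+k}=0$ and $(AB)_{j,j+k}=A_{j,j+k}+B_{j,j+k}$.

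The same inequality settles closure. For $0<m<k$, a nonzero summand of $(A'B')_{j,j+m}$ would need $l-j\ge k$ and $j+m-l\ge k$, hence $m\ge 2k$, a contradiction. Likewise $A'_{j,j+m}=B'_{j,j+m}=0$, so $AB\in\gamma_k(UT_n(R))$.

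There is no real obstacle. The only care needed is the index bookkeeping that shows $A'B'$ has no entries on superdiagonals $1,\dots,k$ (in fact it vanishes through superdiagonal $2k-1$), and checking that the range $j\le n-k$ keeps every index in bounds.
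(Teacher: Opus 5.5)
Your proof is correct and follows essentially the paper's route: reduce to two factors by induction, then expand $(AB)_{j,j+k}$ entrywise. Your splitting $A=I+A'$, $B=I+B'$ is a repackaging of the paper's observation that only the terms $\ell=j$ and $\ell=j+k$ survive, and you also spell out the closure of $\gamma_k(UT_n(R))$ that the paper's induction uses tacitly. One harmless slip: adding $l-j\ge k$ and $j+k-l\ge k$ gives $k\ge 2k$, not $0\ge 2k$, which is still impossible for $k\ge 1$.
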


\begin{proof}
It suffices to prove the case $r=2$, since the general case follows by induction.

Let $A,B \in \gamma_k(UT_n(R))$, and fix $j \le n-k$. We compute
\[
(AB)_{j,j+k} = \sum_{\ell=j}^{j+k} A_{j,\ell} B_{\ell,j+k}.
\]

If $j<\ell<j+k$, then $\ell-j<k$, so $A_{j,\ell}=0$. Hence only the terms $\ell=j$ and $\ell=j+k$ contribute, and
\[
(AB)_{j,j+k}
=
A_{j,j} B_{j,j+k} + A_{j,j+k} B_{j+k,j+k}.
\]

Since both matrices are unitriangular, $A_{j,j}=B_{j+k,j+k}=1$, and therefore
\[
(AB)_{j,j+k} = A_{j,j+k} + B_{j,j+k}.
\]
\end{proof}

In order to exploit this, we will consider products of conjugates by successive powers of a \emph{single} diagonal matrix. The key point is that conjugation by 
such a matrix rescales the entries on a fixed superdiagonal by powers of a single ratio, so that the resulting sum can be arranged to vanish. 
This will allow us to eliminate one superdiagonal at a time.

\begin{lem}\label{lem:conjugation-sum}
Let \(1\leq k\leq n-1\), let
\(
M\in\gamma_k(UT_n(R)),
\)
and let
\(
D=\diag(d_1,\dots,d_n)\in D_n(R^\times).
\)
Then, for every integer \(r\geq 0\) and every \(1\leq j\leq n-k\),

\[
\left(MM^D\cdots M^{D^r}\right)_{j,j+k}
=
\left(
1+\frac{d_{j+k}}{d_j}
+\cdots+
\left(\frac{d_{j+k}}{d_j}\right)^r
\right)M_{j,j+k}.
\]
\end{lem}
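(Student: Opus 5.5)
The plan is to combine two facts: conjugation by a diagonal matrix preserves $\gamma_k(UT_n(R))$ and rescales entries, and products in $\gamma_k$ add entry-wise along the $k$-th superdiagonal by Lemma~\ref{lem:additivity}.

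First I compute the effect of a single diagonal conjugation. With the convention $M^D=D^{-1}MD$, one gets
\[
(M^D)_{i,l}=d_i^{-1}M_{i,l}\,d_l=\frac{d_l}{d_i}M_{i,l}.
\]
The diagonal entries of $M^D$ remain $1$. Zero entries stay zero, so the first $k-1$ superdiagonals of $M^D$ still vanish, and hence $M^D\in\gamma_k(UT_n(R))$. Iterating, $M^{D^s}=(M^{D^{s-1}})^D$ for every $s\ge 0$, and an induction on $s$ gives
\[
(M^{D^s})_{j,j+k}=\left(\frac{d_{j+k}}{d_j}\right)^s M_{j,j+k}.
\]
Alternatively, one can note directly that $D^s=\diag(d_1^s,\dots,d_n^s)$ and apply the same formula.

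Next I apply Lemma~\ref{lem:additivity} to the $r+1$ matrices $M, M^D,\dots,M^{D^r}$, all of which lie in $\gamma_k(UT_n(R))$. The $(j,j+k)$ entry of their product is the sum of their $(j,j+k)$ entries. That sum is $\sum_{s=0}^r (d_{j+k}/d_j)^s M_{j,j+k}$, and factoring out $M_{j,j+k}$ gives the stated formula. The quotient $d_{j+k}/d_j$ makes sense because the $d_i$ are units, and $R$ is commutative, so the order of the factors in the scalar products does not matter.

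There is no real obstacle. The only points needing care are the conjugation convention, which determines whether the ratio comes out as $d_{j+k}/d_j$ or its inverse, and checking that each conjugate stays in $\gamma_k$ so that Lemma~\ref{lem:additivity} applies.
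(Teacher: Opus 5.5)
Your proof is correct and follows the paper's argument: each $M^{D^\ell}$ lies in $\gamma_k(UT_n(R))$, Lemma~\ref{lem:additivity} is applied to $M, M^D,\dots,M^{D^r}$, and the identity $(M^{D^\ell})_{j,j+k}=(d_{j+k}/d_j)^\ell M_{j,j+k}$ finishes the computation. The only cosmetic difference is that you check $M^D\in\gamma_k(UT_n(R))$ directly from the entrywise formula, while the paper cites the normality of $\gamma_k(UT_n(R))$.
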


\begin{proof}
Since \(\gamma_k(UT_n(R))\) is normal, each \(M^{D^\ell}\) lies in
\(\gamma_k(UT_n(R))\). Applying Lemma~\ref{lem:additivity} to the
product
\[
M\,M^D\cdots M^{D^r},
\]
we obtain
\[
\left(M\,M^D\cdots M^{D^r}\right)_{j,j+k}
=
\sum_{\ell=0}^{r}\left(M^{D^\ell}\right)_{j,j+k}.
\]

Using
\[
\left(M^{D^\ell}\right)_{j,j+k}
=
\left(\frac{d_{j+k}}{d_j}\right)^\ell M_{j,j+k},
\]
the result follows.
\end{proof}

\section{Proof of Theorem A}\label{sec:thmA}
First, we show that the subgroup $UT_n(R)$ satisfies a positive generalized identity in $T_n(R)$. For rings $R$ of positive characteristic, this follows from the fact that $UT_n(R)$ is a torsion group of finite exponent, as we will see next.

\begin{prop}\label{prop:char-finite}
Let $R$ be a ring with $\operatorname{char} R = s > 0$, then $UT_n(R)$ satisfies a positive generalized identity of degree $s^{n-1}$ in $T_n(R)$.
\end{prop}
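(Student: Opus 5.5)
The simplest candidate is the power word $w(x)=x^{s^{n-1}}$, i.e. all conjugating elements $g_i$ equal to $1$, so that $w(M)=M^{s^{n-1}}$. The claim then reduces to showing that $UT_n(R)$ has exponent dividing $s^{n-1}$. I would prove by induction on $k$ that for every $M\in\gamma_k(UT_n(R))$ and every $1\le k\le n-1$, we have $M^{s}\in\gamma_{k+1}(UT_n(R))$. Here we set $\gamma_n(UT_n(R))=\{1\}$, which is consistent with the description of the lower central series in Section~\ref{sec:prelims}, since all $n-1$ superdiagonals then vanish.

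The key step is a direct application of Lemma~\ref{lem:additivity} with $M_1=\dots=M_s=M$. For each $j\le n-k$ it gives
\[
(M^s)_{j,j+k}=s\,M_{j,j+k}=0,
\]
because $\operatorname{char}R=s$. Since $\gamma_k(UT_n(R))$ is a subgroup, $M^s$ also lies in $\gamma_k(UT_n(R))$. Its first $k-1$ superdiagonals therefore vanish, and now the $k$-th does as well, so $M^s\in\gamma_{k+1}(UT_n(R))$.

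Iterating from $k=1$ shows that for every $M\in UT_n(R)=\gamma_1(UT_n(R))$,
\[
M^{s^{n-1}}\in\gamma_n(UT_n(R))=\{1\}.
\]
Hence $w(M)=1$ for all $M\in UT_n(R)$, and $w$ has degree $s^{n-1}$ with coefficients $g_i=1\in T_n(R)$, which is the required positive generalized identity.

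No step here is a serious obstacle. The only points needing care are bookkeeping ones: checking that powers stay inside $\gamma_k$ so that Lemma~\ref{lem:additivity} applies at each stage of the induction, and treating the trivial case $n=1$, where $UT_1(R)=\{1\}$ and the word $x^{s^0}=x$ of degree $1$ works.
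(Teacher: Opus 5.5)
Your proposal is correct and essentially matches the paper's proof. The paper also uses the power word $x^{s^{n-1}}$, gets $M^{s}\in\gamma_{k+1}(UT_n(R))$ for $M\in\gamma_k(UT_n(R))$ by additivity on the $k$-th superdiagonal (it cites Lemma~\ref{lem:conjugation-sum}, which with trivial conjugation is your use of Lemma~\ref{lem:additivity} with equal factors), and iterates down the lower central series, while you are slightly more explicit about closure of $\gamma_k(UT_n(R))$ under powers and the case $n=1$.
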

\begin{proof}
By Lemma~\ref{lem:conjugation-sum}, for any
$M \in \gamma_i(UT_n(R))$, we have
\[
(M^{s})_{j,j+i}
=
(\underbrace{1+\dots+1}_{s})\, M_{j,j+i}
=
0,
\]
and therefore
\[
M^{s}\in \gamma_{i+1}(UT_n(R)).
\]

Since $UT_n(R)$ is nilpotent of class $n - 1$, $M^{s^{n-1}} = \Id_n$. Thus $UT_n(R)$ is a torsion group whose exponent divides $s^{n-1}$. Consequently, $UT_n(R)$ satisfies a positive generalized identity of degree $s^{n-1}$ in $T_n(R)$.
\end{proof}

Independently of the characteristic, we can still obtain a positive generalized identity whenever there exist an integer \(s\geq2\) and a unit \(\zeta\in R^\times\) such that
$1 + \zeta + \cdots + \zeta^{s-1} = 0$.

\begin{lem}\label{lem:TG-UT}
Let $R$ be a commutative ring, let \(s\geq2\) be an integer, and let $\zeta \in R^\times$ satisfy
\[
1 + \zeta + \cdots + \zeta^{s-1} = 0.
\]
Then $UT_n(R)$ satisfies a positive generalized identity of degree $s^{\,n-1}$ inside $G_n(\langle \zeta \rangle) \subseteq T_n(R)$.
\end{lem}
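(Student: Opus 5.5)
We build the identity one superdiagonal at a time, using a single diagonal matrix whose consecutive diagonal ratios all equal $\zeta$. Set
\[
D=\diag(1,\zeta,\zeta^2,\dots,\zeta^{n-1})\in D_n(\langle\zeta\rangle)\subseteq G_n(\langle\zeta\rangle).
\]
Then $d_{j+k}/d_j=\zeta^k$ for every $j$ and $k$. The basic polynomial word is
\[
w_1(x)=x\,x^{D}\,x^{D^2}\cdots x^{D^{s-1}},
\]
which has degree $s$.

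The first step treats the superdiagonal $k=1$. For $M\in\gamma_1(UT_n(R))=UT_n(R)$, Lemma~\ref{lem:conjugation-sum} with $r=s-1$ gives
\[
(w_1(M))_{j,j+1}=(1+\zeta+\cdots+\zeta^{s-1})M_{j,j+1}=0
\]
for every $j$. Hence $w_1(M)\in\gamma_2(UT_n(R))$.

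The second step handles the higher superdiagonals $k\ge 2$, and this is where the main obstacle lies. Here the ratio is $\zeta^k$, and $1+\zeta^k+\cdots+\zeta^{k(s-1)}$ need not vanish. I would therefore use the conjugating element $D_k=\diag(1,\zeta,\dots,\zeta^{n-1})^{?}$ only if it can be chosen so that each consecutive-by-$k$ ratio equals $\zeta$. The matrix
\[
D_k=\diag(\zeta^{\lfloor (i-1)/k\rfloor})_{i}
\]
achieves exactly this: its entries are constant on blocks of length $k$, so $d_{j+k}/d_j=\zeta$ for every $j$. Since $D_k\in D_n(\langle\zeta\rangle)$, the word
\[
u_k(x)=x\,x^{D_k}\cdots x^{D_k^{s-1}}
\]
sends $\gamma_k(UT_n(R))$ into $\gamma_{k+1}(UT_n(R))$ by the same computation as in the first step. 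This works because Lemma~\ref{lem:conjugation-sum} only uses the ratios $d_{j+k}/d_j$ on the $k$-th superdiagonal. For $k=1$ this $D_k$ is just $D$.

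The third step composes these words. Define
\[
W=u_{n-1}\circ u_{n-2}\circ\cdots\circ u_1 ,
\]
meaning that the output of $u_1$ is substituted into $u_2$, and so on. I need to check that a composition of polynomial words is again a polynomial word in $x$, with degree equal to the product of the degrees. This holds because $(v(x))^{g}=x^{g_1g}\cdots x^{g_mg}$, so substituting a positive word into a positive word yields a positive word. The degree of $W$ is therefore $s^{n-1}$, and all its coefficients lie in $\langle D_1,\dots,D_{n-1}\rangle\subseteq G_n(\langle\zeta\rangle)$.

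The final step checks that $W$ vanishes on $UT_n(R)$. By induction on $k$, the element $u_k\circ\cdots\circ u_1(M)$ lies in $\gamma_{k+1}(UT_n(R))$, using normality of each $\gamma_k(UT_n(R))$ in $T_n(R)$. Since $\gamma_n(UT_n(R))=1$, we get $W(M)=\Id_n$ for all $M\in UT_n(R)$. The key point to verify carefully is the block-constant choice of $D_k$, which makes the same $\zeta$-sum appear on every superdiagonal.
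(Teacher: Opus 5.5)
Your proposal is correct and matches the paper's proof: the paper also builds the word recursively as $w_{i+1}(x)=w_i(x)\,w_i(x)^{D_i}\cdots w_i(x)^{D_i^{s-1}}$, with $D_i$ constant on blocks of length $i$, and uses Lemma~\ref{lem:conjugation-sum} to kill one superdiagonal at a time. The only difference is cosmetic. The paper's blocks alternate between $\zeta$ and $1$, so its ratios are $\zeta^{\pm1}$ and it must also note that $1+\zeta^{-1}+\cdots+\zeta^{-(s-1)}=0$. Your blocks $1,\zeta,\zeta^2,\dots$ give ratio exactly $\zeta$ everywhere, which avoids that check.
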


\begin{proof}
For each $1 \leq i \leq n-1$, define a diagonal matrix
\[
D_i = \diag(d^{(i)}_1,\dots,d^{(i)}_n)
 \in G_n(\langle \zeta \rangle)\]
whose diagonal entries are constant on blocks of length $i$, alternating between $\zeta$ and $1$, that is,
\[ D_i = \diag
(\underbrace{\zeta,\dots,\zeta}_{i},
 \underbrace{1,\dots,1}_{i},
 \underbrace{\zeta,\dots,\zeta}_{i},
 \underbrace{1,\dots,1}_{i},
 \dots).
\]
With this choice, for every $j \le n-i$, the indices $j$ and $j+i$ lie in consecutive blocks, and therefore
\[
\frac{d^{(i)}_{j+i}}{d^{(i)}_j} \in \{\zeta,\zeta^{-1}\}.
\]

For each $i \ge 1$, define recursively a word $w_i(x)$ by
\[
w_1(x) = x,
\qquad
w_{i+1}(x) = w_i(x)\, w_i(x)^{D_i} \cdots w_i(x)^{D_i^{\,s-1}}.
\]
We will show that for every $M \in UT_n(R)$, one has $w_n(M) = 1$, which yields a positive generalized identity of degree $s^{\,n-1}$ in $G_n(\langle \zeta\rangle)$.

We prove by induction that $w_i(M) \in \gamma_i(UT_n(R))$ for all $i$.

For $i=1$, this is clear since $\gamma_1(UT_n(R)) = UT_n(R)$. Assume that $w_i(M) \in \gamma_i(UT_n(R))$. Applying Lemma~\ref{lem:conjugation-sum}, for $j \le n-i$ we obtain
\[
(w_{i+1}(M))_{j,j+i}
=
\left(
1 + \frac{d^{(i)}_{j+i}}{d^{(i)}_j}
+ \cdots +
\left(\frac{d^{(i)}_{j+i}}{d^{(i)}_j}\right)^{s-1}
\right)
(w_i(M))_{j,j+i}.
\]

Since $\frac{d^{(i)}_{j+i}}{d^{(i)}_j} \in \{\zeta,\zeta^{-1}\}$ and both satisfy
\[
1 + \zeta^{\pm1} + \cdots + (\zeta^{\pm1})^{s-1} = 0,
\]
it follows that $(w_{i+1}(M))_{j,j+i} = 0$ for all $j \le n-i$. Since $w_{i+1}(M)$ is a product of conjugates of $w_i(M)$, it lies in $\gamma_i(UT_n(R))$, and the vanishing of the $i$-th superdiagonal implies that
\[
w_{i+1}(M) \in \gamma_{i+1}(UT_n(R)).
\]

Thus $w_i(M) \in \gamma_i(UT_n(R))$ for all $i$. Since $UT_n(R)$ is nilpotent of class $n-1$, we obtain $w_n(M) = 1$.

By construction, $w_n$ has degree $s^{\,n-1}$, which proves the claim.
\end{proof}

\begin{rem}
Notice that $-1 \in R^\times$ always satisfies $1+(-1)=0$.
Therefore, taking $\zeta=-1$, Lemma~\ref{lem:TG-UT} shows that $UT_n(R)$ always satisfies a positive generalized identity of degree $2^{n-1}$ inside $T_n(R)$.

However, in general, $UT_n(R)$ may satisfy positive generalized identities of different degrees in $T_n(R)$, as in the case of rings with $\operatorname{char}(R)\neq 2$ considered in Proposition~\ref{prop:char-finite}. We will exploit positive generalized identities whose degrees are not powers of two in Section~\ref{sec:applications}. When $\operatorname{char}(R)=2$, the identity obtained above coincides precisely with the one given in Proposition~\ref{prop:char-finite}. 
\end{rem}

Now we proceed with the proof of \hyperref[thm:A]{Theorem~A}.

\begin{proof}[Proof of Theorem~A]
Let $M \in \Tg(T_n(R))$. Then $\pi(M)$ is a generalized torsion element of $D_n(R^\times)$. Since
\[
D_n(R^\times) \simeq (R^\times)^n
\]
is abelian, every generalized torsion element has finite order. Therefore, each entry of $\pi(M)$ has finite order, and hence $\pi(M) \in D_n(\T(R^\times))$.

Consequently,
\[
\Tg(T_n(R))
\subseteq
UT_n(R)\rtimes D_n(\T(R^\times)).
\]

Conversely, let
\[
M \in UT_n(R)\rtimes D_n(\T(R^\times)).
\]
Then $\pi(M) \in D_n(\T(R^\times))$, and therefore $\pi(M)$ has finite order. Let $k \ge 1$ be such that $\pi(M)^k = \Id_n$. It follows that $M^k \in UT_n(R)$. Hence, it suffices to show that every element of $UT_n(R)$ is generalized torsion in $T_n(R)$, since the generalized torsion subset is closed under taking roots.

By Lemma~\ref{lem:TG-UT} (with $\zeta=-1$ and $s=2$), the group $UT_n(R)$ satisfies a positive generalized identity of degree $2^{n-1}$ inside $T_n(R)$. Hence,
\[
UT_n(R) \subseteq \Tg(T_n(R)).
\]

This completes the proof.
\end{proof}

\begin{rem}
As observed in the proof of \hyperref[thm:A]{Theorem~A}, the subgroup
\[
\Tg(T_n(R)) = G_n(\T(R^\times))
\]
is itself a generalized torsion group. Indeed, for every element of this subgroup, the generalized torsion identities constructed in the proof use only conjugation by diagonal matrices in $D_n(\T(R^\times))$.
\end{rem}

\section{Positive generalized identities in triangular matrix groups}\label{sec:identities}

In the previous section, we showed that $UT_n(R)$ always satisfies a positive generalized identity inside $T_n(R)$, and that every generalized torsion element of $T_n(R)$ lies in
\[
G_n(\T(R^\times)) = UT_n(R) \rtimes D_n\big(\T(R^\times)\big).
\]

We now study when the subgroups $G_n(S)$ satisfy positive generalized identities.

\begin{thm}\label{thm:G(S)-identity}
Let $R$ be a commutative ring, let \(s\geq2\) be an integer, and let $S \leq R^\times$ be a subgroup containing an element $\zeta \in S$ such that
\[
1+\zeta+\zeta^2+\cdots+\zeta^{s-1}=0.
\]
Then $G_n(S)$ satisfies a positive generalized identity if and only if $\exp(S)<\infty$. In this case,
\[
G_n(S)\in \widehat{\mathcal B}_{s^{\,n-1}\exp(S)}.
\]
\end{thm}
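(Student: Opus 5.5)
The proof has two directions. For sufficiency we compose the exponent with the identity of Lemma~\ref{lem:TG-UT}. For necessity we push a hypothetical identity down to the abelian quotient $D_n(S)$, where positive generalized identities collapse to exponent conditions.

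\emph{Sufficiency.} Assume $e=\exp(S)<\infty$. Every $g\in G_n(S)$ satisfies $\pi(g)\in D_n(S)$, so $\pi(g^e)=\pi(g)^e=\Id_n$ and hence $g^e\in UT_n(R)$. Lemma~\ref{lem:TG-UT} supplies a word $w(x)=x^{h_1}\cdots x^{h_m}$ with $m=s^{n-1}$ and all $h_i\in G_n(\langle\zeta\rangle)\subseteq G_n(S)$ (indeed all $h_i$ are diagonal), vanishing on $UT_n(R)$. Define
\[
u(x)=w(x^e)=(x^e)^{h_1}\cdots(x^e)^{h_m}=(x^{h_1})^e\cdots(x^{h_m})^e .
\]
Expanding each power, this is a polynomial word of degree $s^{n-1}e$ with coefficients in $G_n(S)$. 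Then $u(g)=w(g^e)=1$ for every $g\in G_n(S)$, so $G_n(S)\in\widehat{\mathcal B}_{s^{n-1}\exp(S)}$.

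\emph{Necessity.} Suppose $G_n(S)$ satisfies $v(x)=x^{g_1}\cdots x^{g_k}$. Apply $\pi$, which maps $G_n(S)$ onto the abelian group $D_n(S)$. Each conjugate $\pi(x)^{\pi(g_i)}$ equals $\pi(x)$ there, so $\pi(g)^k=1$ for all $g\in G_n(S)$. Taking $g=\diag(t,1,\dots,1)$ with $t\in S$ gives $t^k=1$ for every $t\in S$, and therefore $\exp(S)\mid k<\infty$.

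Neither direction is a serious obstacle. The only point needing care is in sufficiency: the coefficients of the composed word must lie in $G_n(S)$ itself rather than merely in $T_n(R)$. This holds because the $D_i$ built in Lemma~\ref{lem:TG-UT} have entries in $\langle\zeta\rangle\le S$.
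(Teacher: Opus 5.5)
Your proof is correct and follows the paper's argument. Sufficiency is identical: compose the word $w_n$ of Lemma~\ref{lem:TG-UT} with $x\mapsto x^{\exp(S)}$, noting the conjugators are diagonal with entries in $\langle\zeta\rangle\le S$; for necessity the paper evaluates the identity at the central scalar matrices $\diag(a,\dots,a)$ with $a\in S$, whereas you project onto the abelian quotient $D_n(S)$ via $\pi$, and both one-line arguments give $\exp(S)\mid k$.
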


\begin{proof}
Since $\zeta \in S$, Lemma~\ref{lem:TG-UT} implies that $UT_n(R)$ satisfies a positive generalized identity of degree $s^{\,n-1}$ inside $G_n(S)$.

Assume first that $\exp(S)=k<\infty$. Let $M\in G_n(S)$. Since $\pi(M)$ is an element of $D_n(S) \simeq S^n$, we have $M^k\in UT_n(R)$. Applying the positive generalized identity for $UT_n(R)$ to $M^k$, we obtain
\[
w_n(M^k)=1,
\]
where $w_n$ is the word constructed in Lemma~\ref{lem:TG-UT}. Equivalently, defining $w_0(x)=x^k$, we have
\[
w_n(w_0(M))=1
\qquad \text{for all } M\in G_n(S).
\]
Hence $G_n(S)$ satisfies a positive generalized identity of degree $s^{\,n-1}k$.

Conversely, suppose that \(G_n(S)\) satisfies
\[
x^{g_1}\cdots x^{g_m}=1.
\]
For \(a\in S\), let \(D(a)=\diag(a,\ldots,a)\).
Since \(D(a)\in Z(G_n(S))\), substitution gives
\[
1=D(a)^{g_1}\cdots D(a)^{g_m}
 =D(a)^m.
\]
Thus \(a^m=1\) for every \(a\in S\), and hence
\(\exp(S)\mid m\).
\end{proof}

As a direct consequence, we have the following proof for \hyperref[thm:B]{Theorem~B}.

\begin{proof}[Proof of Theorem~B]
Apply Theorem~\ref{thm:G(S)-identity} with $S=R^\times$, $\zeta=-1$, and $s=2$.
\end{proof}

We end this section with an example where the generalized order of elements in $T_n(R)$ need not be the upper bound obtained in Theorem~\ref{thm:G(S)-identity}.

\begin{rem}
By Theorem~\ref{thm:G(S)-identity}, the group $T_2(\mathbb Z)$ satisfies a positive generalized identity of degree~$4$. However, every nontrivial element has generalized order~$2$.

Indeed, for
\[
M=
\begin{pmatrix}
\varepsilon_1 & x\\
0 & \varepsilon_2
\end{pmatrix},
\qquad
X=
\begin{pmatrix}
1 & \varepsilon_1x\\
0 & -1
\end{pmatrix},
\]
one has $X^{-1}MX=M^{-1}$, and hence $MM^X=1$.
\end{rem}

\section{Examples}\label{sec:examples}

In this section, we present examples of classes of rings for which $G_n(\T(R^\times))$ and/or $T_n(R)$ satisfy a positive generalized identity by applying \hyperref[thm:B]{Theorem~B} and Theorem \ref{thm:G(S)-identity}.

We begin with two classes of commutative rings for which $G_n(\T(R^\times))$ always satisfies a positive generalized identity, whereas $T_n(R)$ does so only in specific, classifiable cases.

\subsection{Rings of integers of algebraic number fields}

Let $K$ be an algebraic number field and let $O_K$ denote its ring of integers. Let $r_1$ be the number of real embeddings of $K$, and let $r_2$ be the number of pairs of conjugate complex embeddings.

By Dirichlet's Unit Theorem (see \cite{Neukirch}), one has
\[
O_K^\times \cong \mu_K \times \mathbb Z^r,
\]
where $\mu_K$ is the group of roots of unity in $K$ and $r = r_1 + r_2 - 1.$

In particular, $\T(O_K^\times)$ is finite. Hence, by Theorem~\ref{thm:G(S)-identity}, we obtain the following.

\begin{exe}
The group $G_n(\T(O_K^\times))$ satisfies a positive generalized identity.
\end{exe}

Moreover, $O_K^\times = \T(O_K^\times)$ if and only if $r=0$. Therefore, by \hyperref[thm:B]{Theorem~B}, we have the following characterization.

\begin{exe}
The group $T_n(O_K)$ satisfies a positive generalized identity if and only if $K=\mathbb Q$ or $K$ is an imaginary quadratic field.
\end{exe}

\begin{proof}
If $r=0$, then $r_1+r_2=1$. Hence either $r_1=1$ and $r_2=0$, or $r_1=0$ and $r_2=1$.

In the first case, $K=\mathbb Q$. In the second case, $[K:\mathbb Q]=2$, and $K$ has no real embeddings. Therefore, $K$ is an imaginary quadratic field.
\end{proof}

\subsection{Integral group rings over finite abelian groups}

Let $G$ be a finite abelian group. Denote by $n_2$ the number of elements of order $2$ in $G$, and by $c$ the number of cyclic subgroups of $G$.

A theorem of Higman (see \cite{Higman1940}) states the following.

\begin{thm}[Higman]
Let $G$ be a finite abelian group. Then
\[
\mathbb ZG^\times \cong \pm G \times \mathbb Z^r,
\]
where
\[
r = \frac{|G|+1+n_2-2c}{2}.
\]
\end{thm}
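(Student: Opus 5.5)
This is Higman's classical theorem, which the paper only cites; the plan below follows the standard proof via the Wedderburn decomposition and Dirichlet's unit theorem. The torsion part and the rank are handled separately.

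\emph{Torsion part.} Since $G$ is finite abelian, $\mathbb{Q}G \cong \prod_{C} \mathbb{Q}(\zeta_{|C|})$, where $C$ runs over the $c$ cyclic quotients of $G$. For finite abelian groups these are in bijection with the cyclic subgroups, and the map is given by grouping the characters of $G$ into Galois orbits. The order $\mathbb{Z}G$ embeds in the maximal order $\mathcal{O}=\prod_C \mathbb{Z}[\zeta_{|C|}]$ with finite index. Hence $\mathbb{Z}G^\times$ has finite index in $\mathcal{O}^\times$, so $\mathbb{Z}G^\times$ is finitely generated and its torsion subgroup is finite. To show that the torsion units are exactly $\pm G$, take a unit $u=\sum a_g g$ of finite order. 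Every character value $\chi(u)$ is a root of unity, so
\[
\sum_g |a_g|^2=\frac{1}{|G|}\sum_\chi|\chi(u)|^2=1 .
\]
Since the $a_g$ are integers, exactly one of them is nonzero and it equals $\pm1$. So $u=\pm g$.

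\emph{Rank.} The free rank is unchanged by passing to a finite-index subgroup, so it equals the rank of $\mathcal{O}^\times$. By Dirichlet's unit theorem this is
\[
r=\sum_C\bigl(r_1(C)+r_2(C)-1\bigr).
\]
For the field $\mathbb{Q}(\zeta_m)$ there are two cases.
\begin{itemize}
\item If $m\le 2$, the field is $\mathbb{Q}$ and contributes $0$. This happens for $1+n_2$ summands: the trivial quotient and the $n_2$ quotients of order $2$ (the latter correspond to the $n_2$ subgroups of order $2$).
\item If $m>2$, the field is totally imaginary of degree $\varphi(m)$, so it contributes $\varphi(m)/2-1$.
\end{itemize}
Using $\sum_C\varphi(|C|)=|G|$, which just counts the elements of $G$ by the cyclic subgroup they generate, we get
\[
r=\tfrac12\bigl(|G|-1-n_2\bigr)-\bigl(c-1-n_2\bigr)=\frac{|G|+1+n_2-2c}{2}.
\]

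\emph{Splitting off the torsion.} A finitely generated abelian group is the direct product of its torsion subgroup and a free abelian group. Therefore $\mathbb{Z}G^\times\cong \pm G\times\mathbb{Z}^r$.

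The main obstacle is the torsion-unit step, specifically the bookkeeping that shows the character identity for $\sum_g|a_g|^2$ holds. The rank computation is routine once the decomposition of $\mathbb{Q}G$ and the count of the $m\le2$ components are in place.
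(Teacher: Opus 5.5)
The paper gives no proof of this statement. It is quoted from Higman's 1940 paper and used only as a black box in the integral group ring examples, so there is no internal argument to compare with. Your proof is the standard one (Perlis--Walker decomposition of $\mathbb{Q}G$, Dirichlet's unit theorem for the maximal order, character orthogonality for the torsion units), and it is correct.

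Three steps deserve a line each:
\begin{enumerate}
\item Your ``Hence $\mathbb{Z}G^\times$ has finite index in $\mathcal{O}^\times$'' hides the usual congruence argument. Set $m=[\mathcal{O}:\mathbb{Z}G]$, so $m\mathcal{O}\subseteq\mathbb{Z}G$. Every $u$ in the kernel of $\mathcal{O}^\times\to(\mathcal{O}/m\mathcal{O})^\times$ satisfies $u,u^{-1}\in 1+m\mathcal{O}\subseteq\mathbb{Z}G$. So this kernel, which has finite index because $\mathcal{O}/m\mathcal{O}$ is finite, lies in $\mathbb{Z}G^\times$.
\item The step you flag as the main obstacle is just column orthogonality. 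Since $\sum_\chi\chi(x)$ equals $|G|$ if $x=1$ and $0$ otherwise, you get $\sum_\chi|\chi(u)|^2=\sum_{g,h}a_ga_h\sum_\chi\chi(gh^{-1})=|G|\sum_g a_g^2$. Each of the $|G|$ linear characters is a ring homomorphism $\mathbb{Z}G\to\mathbb{C}$, so $|\chi(u)|=1$ for all of them.
\item The cleanest justification for the $n_2$ factors equal to $\mathbb{Q}$ from order-$2$ quotients is this: index-$2$ subgroups correspond to elements of order $2$ in $\widehat{G}\cong G$.
\end{enumerate}

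Your route also yields the intermediate formula $r=\sum_{d\ge 3}a_d\bigl(\varphi(d)/2-1\bigr)$, where $a_d$ is the number of cyclic subgroups of order $d$. This is exactly the identity the paper re-derives combinatorially in its exponent $1,2,3,4,6$ example. With your proof that example becomes immediate, since each summand vanishes precisely when $\varphi(d)=2$.
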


In particular, $\T(\mathbb ZG^\times)$ is finite. Hence, by Theorem~\ref{thm:G(S)-identity}, we obtain the following.

\begin{exe}
Let $G$ be a finite abelian group. Then the group $G_n(\T(\mathbb Z G^\times))$ satisfies a positive generalized identity.
\end{exe}

Moreover, $\mathbb ZG^\times=\T(\mathbb ZG^\times)$ if and only if $r=0$. Therefore, by \hyperref[thm:B]{Theorem~B}, we obtain the following characterization.

\begin{exe}
Let $G$ be a finite abelian group. Then $T_n(\mathbb ZG)$ satisfies a positive generalized identity if and only if $G$ has exponent $1$, $2$, $3$, $4$, or $6$.
\end{exe}
\begin{proof}
Let $a_d$ be the number of cyclic subgroups of order $d$. Since each cyclic subgroup of order $d$ has $\varphi(d)$ generators,
\[
|G|
=
1+n_2+\sum_{d\ge3}\varphi(d)a_d, \quad
c=1+n_2+\sum_{d\ge3}a_d.
\]
Hence
\[
|G|+n_2+1-2c
=
\sum_{d\ge3}(\varphi(d)-2)a_d.
\]

Now $\varphi(d)-2=0$ exactly for $d=3,4,6$, and $\varphi(d)-2>0
$ for all $d \geq 3$ with $d\notin\{3,4,6\}$. Therefore, $r = 0$ if and only if every cyclic subgroup of $G$ has order $1,2,3,4,$ or $6$, which is equivalent to the condition of $G$ having an exponent equal to $1,2, 3, 4,$ or $6$.
\end{proof}

If $R$ is a field, then $\T(R^\times)$ has finite exponent if and only if $\T(R^\times)$ is finite. We conclude this section by presenting two classes of fields illustrating when this phenomenon does and does not occur.

\subsection{Orderable fields}

Let $(K,\leq)$ be an orderable field. Then
\[
\T(K^\times)=\{-1,1\}.
\]

Indeed, if $x>1$, then clearly $x\notin \T(K^\times)$. Likewise, if $0<x<1$, then $x\notin \T(K^\times)$. Hence, if $x\in \T(K^\times)$, then either $x=1$ or $x<0$.

Suppose now that $x<0$. If $n$ is odd, then $x^n<0$, so $x^n\neq1$. Therefore, if $x$ has finite order, this order must be even. Hence, there exists $m\geq1$ such that
\[
x^{2m}=1.
\]
Thus $(x^2)^m=1$. Since $x^2>0$, the only positive torsion element is $1$, and therefore $x^2=1$. Consequently, $x=\pm1$.

Therefore, $\T(K^\times)$ is finite, while $K^\times$ is not. Hence, we obtain the following.

\begin{exe}
    Let $K$ be an orderable field. Then $G_n(\T(K^\times))$ satisfies a positive generalized identity, while $T_n(K)$ does not.
\end{exe}

\subsection{Algebraically closed fields}

Let $K$ be an algebraically closed field. 

Let \(p\) range over primes different from
\(\operatorname{char}(K)\). Since \(K\) is algebraically closed,
\(x^p-1\) splits in \(K\); since \(p\neq\operatorname{char}(K)\),
it has \(p\) distinct roots. Therefore it has a nontrivial root,
and every nontrivial \(p\)-th root of unity has order \(p\).
Thus \(K^\times\) contains torsion elements of arbitrarily large
order, so \(\exp(\T(K^\times))=\infty\).

Hence, we have the following:

\begin{exe}
    Let $K$ be an algebraically closed field. Then neither $G_n(\T(K^\times))$ nor $T_n(K)$ satisfy a positive generalized identity.
\end{exe}

\section{Applications}\label{sec:applications}

In \cite{ADFG}, the authors study the existence of infinite finitely generated solvable groups $G$ in the class $\widehat{\mathcal{B}}_n$. They show that no such groups exist when $n$ is prime, while examples are constructed when $n$ is a square. In \cite{BM}, a family of examples is obtained for every integer $n$ divisible by $p^3$, for a given prime $p$. We extend these results while imposing stronger structural conditions on the groups.

\begin{thm}
Let $p$ be a prime and let $n>0$ be such that $p^2 \mid n$. Then there exists an infinite finitely presented metabelian group in $\widehat{\mathcal{B}}_n$.
\end{thm}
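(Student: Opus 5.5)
The plan is to realize the required group as one of the groups $G_n(S)$ from Theorem~\ref{thm:G(S)-identity}, with $n=2$ and a suitable ring. To avoid a clash with the $n$ of the statement, write $N$ for the degree in the statement, so $p^2 \mid N$, and keep matrix size $2$.

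Let $\zeta=\zeta_p$ be a primitive $p$-th root of unity, $R=\mathbb Z[\zeta]$ and $S=\langle\zeta\rangle\le R^\times$. (When $p=2$ this is just $R=\mathbb Z$, $\zeta=-1$.) Since $\zeta\neq 1$ is a root of $x^p-1=(x-1)(1+x+\cdots+x^{p-1})$ in the domain $R$, we have $1+\zeta+\cdots+\zeta^{p-1}=0$. Also $\exp(S)=p$. Theorem~\ref{thm:G(S)-identity} with $s=p$ and matrix size $2$ then gives
\[
G_2(S)\in\widehat{\mathcal B}_{p\cdot p}=\widehat{\mathcal B}_{p^2}.
\]

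Next we pass from degree $p^2$ to degree $N$. If $w(x)$ is a positive generalized identity of degree $m$, then $w(x)^{k}$ is again a polynomial word, now of degree $km$, and it is still an identity. Taking $k=N/p^2$ shows $G_2(S)\in\widehat{\mathcal B}_N$.

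It remains to check the structural properties of $G=G_2(S)=UT_2(R)\rtimes D_2(S)$.
\begin{itemize}
\item[(i)] \emph{Metabelian.} $UT_2(R)\cong(R,+)$ is abelian and normal, and the quotient $D_2(S)\cong S^2$ is abelian.
\item[(ii)] \emph{Infinite.} $R$ is infinite, so $UT_2(R)$ is infinite.
\item[(iii)] \emph{Finitely presented.} $(R,+)\cong\mathbb Z^{p-1}$ is free abelian of finite rank, and $D_2(S)\cong C_p\times C_p$ is finite. So $G$ is a (finitely generated free abelian)-by-finite group, hence polycyclic. Polycyclic groups are finitely presented. Alternatively, an extension of a finitely presented group by a finitely presented group is finitely presented.
\end{itemize}

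The only step needing real care is confirming the hypotheses of Theorem~\ref{thm:G(S)-identity}: the vanishing $1+\zeta+\cdots+\zeta^{p-1}=0$ holds, and $\exp(S)$ equals $p$ exactly, not some larger number that would break divisibility into $N$. Both follow from $\zeta$ being a primitive $p$-th root of unity in an integral domain. Everything else is routine.
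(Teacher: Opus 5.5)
Your argument is correct, but it takes a different route from the paper's.

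Writing $N=p^2m$, the paper takes $R=\mathbb Z[\zeta_{pm}]$ and $S=\langle\zeta_{pm}\rangle$. The element $\zeta_p\in S$ gives the vanishing sum with $s=p$, and $\exp(S)=pm$. So Theorem~\ref{thm:G(S)-identity} yields an identity of degree $p\cdot pm=N$ directly, with no repetition of a shorter identity.

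Your group $G_2(\langle\zeta_p\rangle)$ over $\mathbb Z[\zeta_p]$ is the case $m=1$ of that construction. You then repeat the degree-$p^2$ identity $N/p^2$ times. This is valid for the statement as written. Your polycyclicity argument for finite presentability is also a justification the paper omits.

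The cost is that your proof shows the stated theorem reduces to the single case $N=p^2$. The paper explicitly remarks, right after its proof, that this repetition trick is what it is improving on.

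What the paper's choice buys comes from the converse half of Theorem~\ref{thm:G(S)-identity}. Every positive generalized identity of $G_2(\langle\zeta_{pm}\rangle)$ has degree divisible by $\exp(S)=N/p$. Hence the minimal identity degree of the paper's group is at least $N/p$ and grows with $N$. For your group, identity degrees are only forced to be multiples of $p$, and the minimal degree is at most $p^2$ whatever $N$ is.
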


\begin{proof}
Write $n = p^2 m$, and let $R = \mathbb{Z}[\zeta_{pm}]$ be the ring of $pm$-th cyclotomic integers. Let $\zeta_p \in R$ be a primitive $p$-th root of unity in $\langle \zeta_{pm} \rangle$, and consider
\[
G_2(\langle \zeta_{pm} \rangle)
=
\left\{
\begin{pmatrix}
d_1 & x \\
0 & d_2
\end{pmatrix}
\in T_2(R)
\;\middle|\;
d_i \in \langle \zeta_{pm} \rangle,\; x \in R
\right\}.
\]

Then $G_2(\langle \zeta_{pm} \rangle)$ is an infinite, finitely presented metabelian group. Moreover, since $\zeta_p \in \langle \zeta_{pm} \rangle$ and $\exp(\langle \zeta_{pm} \rangle)=pm$, Theorem~\ref{thm:G(S)-identity} applies, and we obtain
\[
G_2(\langle \zeta_{pm} \rangle) \in \widehat{\mathcal{B}}_{p^2 m} = \widehat{\mathcal{B}}_{n}.
\]
\end{proof}

The examples in \cite{ADFG} satisfying a positive generalized identity of degree \(p^2\) also satisfy one of degree \(n\) whenever \(p^2 \mid n\), simply by repeating the identity \(n/p^2\) times. However, the minimum degree of a positive generalized identity for each of those groups is therefore at most \(p^2\). 

By contrast, the groups
constructed above satisfy a positive generalized identity of degree \(n\), while every such identity has degree divisible by \(n/p\).  Consequently, their minimum positive generalized identity degree is at least \(n/p\). 

Thus, whereas the earlier construction yields identities of arbitrarily large degrees by taking multiples of a fixed degree-\(p^2\) identity, our construction yields groups whose minimum possible identity degree grows with \(n\).

\begin{conjecture}
There are no infinite finitely generated solvable groups $G$ satisfying a positive generalized identity of degree $n$, where $n$ is a square-free integer.
\end{conjecture}

Throughout this paper, we have constructed positive generalized identities using conjugating elements of finite order. This fact relates to the work of G.\,J. Sherman~\cite{Sherman}, who studied groups whose torsion subset forms a subgroup. In particular, he proved the following.

\begin{prop}[Sherman]
Suppose that a group \(G\) contains an element \(g\) such that, for
every positive integer \(k\) and every choice of torsion elements
\(x_1,\dots,x_k\in G\), one has
\[
g^{x_1}g^{x_2}\cdots g^{x_k}\neq1.
\]
Then \(G\) is generated by its elements of infinite order.
\end{prop}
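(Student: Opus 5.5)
The plan is to show that $G$ is generated by its elements of infinite order. Let $g$ be the element in the hypothesis, and let $H$ be the subgroup generated by all elements of infinite order. Since conjugation preserves orders, $H$ is normal in $G$. It suffices to show that every torsion element $t\in G$ lies in $H$. All other elements already have infinite order and so lie in $H$ by definition.

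First I note that $g$ has infinite order. Taking all $x_i=1$, which is a torsion element, gives $g^k\neq 1$ for every $k\geq 1$. More generally, every conjugate $g^{x}$ by a torsion element $x$ has infinite order, for the same reason. Now fix a torsion element $t$. I will show that $gt$, or some closely related product, has infinite order. Then $t = g^{-1}(gt)$ lies in $H$.

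The key computation is as follows. Suppose $t$ has finite order $m$ and $gt$ also had finite order $k$. Using the identity
\[
(gt)^k = g\, g^{t^{-1}}\, g^{t^{-2}} \cdots g^{t^{-(k-1)}}\, t^{k},
\]
which is checked by rewriting $t^{-j} g t^{j}$ as a conjugate and collecting terms, we obtain relations among the conjugates. To remove the trailing power of $t$, I raise once more: take $N=\operatorname{lcm}(k,m)$. Then $(gt)^N=1$, and the same expansion gives
\[
1=(gt)^N = g^{x_1}g^{x_2}\cdots g^{x_N}\, t^{N} = g^{x_1}\cdots g^{x_N},
\]
where each $x_i$ is a power of $t$, and hence torsion. (I will fix the convention $g^x=x^{-1}gx$ carefully and check the signs of the exponents.) This contradicts the hypothesis on $g$. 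So $gt$ has infinite order, and therefore $t\in H$.

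The main obstacle is the bookkeeping in the expansion $(gt)^N = \prod_j t^{j} g t^{-j}\cdot t^N$, up to ordering. The conjugating elements must come out as powers of $t$, hence torsion, and the leftover factor must be exactly $t^N=1$. Once this identity is verified, the conclusion that $G=H$ is immediate.
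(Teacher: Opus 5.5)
Your argument is correct and complete. The paper gives no proof of this proposition. It is quoted from Sherman only to set up the counterexample to its converse, so there is no in-paper argument to compare against.

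Your key identity holds with the paper's convention $g^x=x^{-1}gx$. Since $g^{t^{-j}}=t^{j}gt^{-j}$, the product $g\,g^{t^{-1}}\cdots g^{t^{-(N-1)}}\,t^{N}$ telescopes to $(gt)^N$. Suppose $t^m=1$ and $(gt)^k=1$, and take $N=km$; any common multiple works, so the lcm is not needed. Then $N$ conjugates of $g$ by the torsion elements $1,t^{-1},\dots,t^{-(N-1)}$ multiply to $1$, contradicting the hypothesis. Also, $g$ (and hence $g^{-1}$) has infinite order because $1$ is a torsion element. So $t=g^{-1}(gt)$ is a product of two elements of infinite order.

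The normality of $H$ and the remark about conjugates $g^x$ are never used and can be dropped.

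Your proof actually gives slightly more than stated. Every element of $G$ is a product of at most two elements of infinite order. Moreover, the hypothesis is only invoked for conjugators in a cyclic subgroup $\langle t\rangle$ generated by a single torsion element. So it suffices that $g$ and every product $gt$, with $t$ torsion, have infinite order.
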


G.\,J. Sherman asked whether the converse holds. We provide a counterexample to this question.

\begin{thm}
There exists a group \(G\), generated by elements of infinite order,
for which there exist a positive integer \(k\) and fixed torsion
elements \(x_1,\dots,x_k\in G\) such that
\[
g^{x_1}g^{x_2}\cdots g^{x_k}=1
\qquad
\text{for every }g\in G.
\]
\end{thm}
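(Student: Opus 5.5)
We take $G=T_n(R)$ for a ring whose unit group is a torsion group of finite exponent but which is not generated by torsion alone, for instance $G=T_2(\mathbb Z)$. \hyperref[thm:B]{Theorem~B} (via Theorem~\ref{thm:G(S)-identity} with $S=\mathbb Z^\times=\{\pm1\}$, $\zeta=-1$, $s=2$) already gives a positive generalized identity of degree $2\cdot 2=4$ on $T_2(\mathbb Z)$. So the plan has two steps. First, check that all conjugating elements in the word can be taken to be torsion. Second, show that $T_2(\mathbb Z)$ is generated by elements of infinite order.

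\emph{Torsion conjugators.} The identity is $w_2(x^2)$, where $w_2(y)=y\,y^{D_1}$ and $D_1=\diag(-1,1)$. Written as a polynomial word, it is $x\,x\,x^{D_1}x^{D_1}$. The conjugators are therefore $1$ and $D_1$, and both are torsion ($D_1^2=1$). Concretely, we verify that $M^2$ is unitriangular for every $M\in T_2(\mathbb Z)$, and that $N N^{D_1}=1$ for every unitriangular $N$, since conjugation by $D_1$ negates the $(1,2)$ entry. This yields $k=4$ with $x_1=x_2=1$ and $x_3=x_4=D_1$. If one prefers to avoid the trivial conjugator, the identity element is still torsion, so nothing further is needed.

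\emph{Generation by infinite-order elements.} Let $E=\begin{pmatrix}1&1\\0&1\end{pmatrix}$, which has infinite order. A diagonal matrix $\diag(\varepsilon_1,\varepsilon_2)$ factors as $(\diag(\varepsilon_1,\varepsilon_2)E)\,E^{-1}$. The matrix $\diag(\varepsilon_1,\varepsilon_2)E$ has infinite order exactly when $\varepsilon_1=\varepsilon_2$, because when $\varepsilon_1\neq\varepsilon_2$ its square is the identity. To handle every diagonal, we use the fact that the upper triangular elements of infinite order are exactly those with equal diagonal entries and nonzero off-diagonal entry. We then write $\diag(1,-1)$ as a product of such elements, which requires care since the diagonal part of any such product has equal entries. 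This is the main obstacle: in $T_2(\mathbb Z)$ the subgroup generated by infinite-order elements is contained in $\{M: M_{11}=M_{22}\}$, a proper subgroup.

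So we enlarge the example. We take $G=T_2(\mathbb Z)\times\mathbb Z$, or more cleanly $G=T_2(\mathbb Z)\times C$ with $C=\langle c\rangle$ infinite cyclic. Then $C$ is central, so we adjust the identity instead. A cleaner option is $G=T_2(\mathbb Z)\times \mathbb Z/1$; but since the positive identity forces $x^4$-type constraints on central elements, an infinite cyclic factor is not allowed. Instead we generate with mixed elements: in $G=T_2(\mathbb Z)$ this cannot work, so we pass to $T_3(\mathbb Z)$. There, $\diag(1,-1,\ast)$ combined with the unitriangular part can have infinite order when two diagonal entries agree and the corresponding entry above them is nonzero. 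Take, for example, $\diag(-1,1,1)+e_{23}$, which has infinite order. Since the infinite-order elements form a normal subset, it remains to check that they generate all of $T_3(\mathbb Z)$: we produce each $\diag(\pm1,\pm1,\pm1)$ as $(D+e_{ij})\cdot(I-D^{-1}e_{ij})$-type products, choosing the entry $e_{ij}$ between two equal diagonal entries, and we obtain the generators $I+e_{ij}$ directly. Every diagonal sign pattern in dimension $3$ has two equal entries, so each diagonal is covered. Finally, \hyperref[thm:B]{Theorem~B} gives an identity of degree $2^2\cdot2=8$ with conjugators the torsion matrices $D_1,D_2\in D_3(\{\pm1\})$, which completes the counterexample.
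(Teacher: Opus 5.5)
Your final construction, $G=T_3(\mathbb Z)$, is correct, but it is a different example from the paper's, and the write-up needs pruning. Your opening choice $G=T_2(\mathbb Z)$ does not work. As you observe yourself, the infinite-order elements of $T_2(\mathbb Z)$ are exactly those with $M_{11}=M_{22}$ and $M_{12}\neq 0$, so they generate only the proper subgroup $\{M: M_{11}=M_{22}\}$. Delete that detour, and delete the remarks about $T_2(\mathbb Z)\times C$: a central element $c$ of infinite order would force $c^k=1$ in any positive generalized identity.

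What remains is sound. For $D\in D_3(\{\pm1\})$, pick $i<j$ with $d_i=d_j$, which pigeonhole guarantees. Then $(D+E_{ij})^2=I+2d_iE_{ij}\neq I$. Since $UT_3(\mathbb Z)$ is torsion-free, $D+E_{ij}$ has infinite order. Also $D=(D+E_{ij})(I-d_iE_{ij})$ exactly, and the second factor is a nontrivial unitriangular matrix, hence also of infinite order. You should state this square computation explicitly rather than saying such elements "can have infinite order." The identity from Theorem~B is $w_3(x^2)$. Its conjugators are $1,D_1,D_2,D_1D_2$, not just $D_1,D_2$, and all of them lie in the exponent-$2$ group $D_3(\{\pm1\})$. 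The normality of the set of infinite-order elements plays no role and can be dropped.

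The paper stays in dimension $2$ and changes the ring to $R=\mathbb Z\times\mathbb Z$ instead. There $M=\begin{pmatrix}u&x\\0&v\end{pmatrix}$ has infinite order if and only if $(u+v)x\neq0$. Because $R$ has zero divisors, $u+v$ can be nonzero with $u\neq v$, for example $u=(1,1)$ and $v=(1,-1)$. So infinite-order elements with distinct diagonal entries exist, and each $\diag(u,v)$ is written as a product of two of them.

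Your route keeps the base ring $\mathbb Z$, an integral domain, and uses the extra diagonal slot to force a repeated sign. The price is a degree-$8$ identity in $3\times 3$ matrices. The paper's route keeps $2\times2$ matrices and the degree-$4$ identity $x^2(x^2)^D$ with a single nontrivial conjugator, at the cost of a non-domain base ring. Your analysis also explains why the paper could not simply take $T_2(\mathbb Z)$, and your argument works verbatim for $T_n(\mathbb Z)$ with any $n\geq 3$.
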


\begin{proof}
Let $R = \mathbb{Z} \times \mathbb{Z}$. Then $R^\times = \{\pm1\} \times \{\pm1\}$, so $\T(R^\times)=R^\times$. By \hyperref[thm:B]{Theorem~B}, the group $T_2(R)$ satisfies a positive generalized identity.

More precisely, taking
\[
D = \begin{pmatrix}
-1 & 0 \\
0 & 1
\end{pmatrix},
\]
one obtains a positive generalized identity of the form
\[
w(x) = x^{2} (x^{2})^{D},
\]
where the conjugating element $D$ has finite order. Thus $T_2(R)$ satisfies a positive generalized identity in which all conjugating elements are torsion.

Now consider the subset
\[
S =
\left\{
\begin{pmatrix}
u & x \\
0 & v
\end{pmatrix}
\in T_2(R)
\;\middle|\;
ux + vx \neq 0
\right\}.
\]

We claim that every element of $S$ has infinite order. Since $D_2(R^\times)$ has exponent $2$ and $UT_2(R)$ is torsion-free, an element $M \in T_2(R)$ has finite order if and only if $M^2=\Id$. For
\[
M =
\begin{pmatrix}
u & x \\
0 & v
\end{pmatrix},
\]
one computes
\[
M^2 =
\begin{pmatrix}
u^2 & ux + vx \\
0 & v^2
\end{pmatrix}.
\]
Thus $M^2 = I$ if and only if $ux + vx = 0$. By definition of $S$, this never occurs, and hence every element of $S$ has infinite order.

We now show that $S$ generates $T_2(R)$. Every nontrivial unitriangular matrix lies in $S$, so it remains to generate $D_2(R^\times)$.

Let $u,v \in R^\times$ and choose $y \in R^\times \setminus \{-u,-v\}$. Then
\[
\begin{pmatrix}
1 & -1 \\
0 & vy
\end{pmatrix},
\qquad
\begin{pmatrix}
u & y \\
0 & y
\end{pmatrix}
\in S,
\]
and a direct computation shows that
\[
\begin{pmatrix}
u & 0 \\
0 & v
\end{pmatrix}
=
\begin{pmatrix}
1 & -1 \\
0 & vy
\end{pmatrix}
\begin{pmatrix}
u & y \\
0 & y
\end{pmatrix}.
\]
Thus $D_2(R^\times) \subseteq \langle S \rangle$, and therefore $S$ generates $T_2(R)$.

Hence $T_2(R)$ is generated by elements of infinite order and satisfies a positive generalized identity with torsion conjugating elements.
\end{proof}

\section*{Acknowledgements}
The first author acknowledges financial support from the Fundação de Apoio à Pesquisa do Distrito Federal (FAPDF), project no.~00193.00001270/2025-48. The second author acknowledges financial support from the Fundação de Amparo à Pesquisa do Estado de Minas Gerais (FAPEMIG) through a doctoral scholarship (no.~30500) and project APQ-03491-25.

\end{document}